\DeclareMathOperator{\Log}{Log}
\DeclareMathOperator{\Rea}{Re}
\newcommand{\Int}{\int_{-\infty}^\infty}
 \newcommand{\eps}{\varepsilon}
 \renewcommand{\a}{\alpha}
 \newcommand{\La}{\Lambda}
\newcommand{\C}{\mathbb{C}}
\newcommand{\R}{\mathbb{R}}
\newcommand{\N}{\mathbb{N}}
\newtheorem{thm}{Theorem}[section]
\newtheorem{lemma}[thm]{Lemma}
\newtheorem{prop}[thm]{Proposition}
\newtheorem{rem}[thm]{Remark}
\title{A two-parameter extension of the Urbanik semigroup}
\author{Christian Berg}
\begin{document}
\maketitle

\begin{abstract} We prove that $s_n(a,b)=\Gamma(an+b)/\Gamma(b), n=0,1,\ldots$ is an infinitely divisible Stieltjes moment sequence for arbitrary $a,b>0$. Its powers $s_n(a,b)^c, c>0$ are Stieltjes determinate if and only if $ac\le 2$. The latter was conjectured in a paper by Lin (ArXiv: 1711.01536) in the case $b=1$. We describe a product convolution semigroup $\tau_c(a,b),\, c>0$  
of probability measures on the positive half-line with densities $e_c(a,b)$  and having the moments
$s_n(a,b)^c$. We determine the asymptotic behaviour of $e_c(a,b)(t)$ for $t\to 0$ and for $t\to\infty$, and the latter implies the Stieltjes indeterminacy when $ac>2$. The results extend previous work of the author and J. L. L\'opez and lead to a convolution semigroup of probability densities $(g_c(a,b)(x))_{c>0}$ on the real line. The special case $(g_c(a,1)(x))_{c>0}$
are the convolution roots of the Gumbel distribution with scale parameter $a>0$. All the densities
$g_c(a,b)(x)$ lead to determinate Hamburger moment problems.  
  \end{abstract}
\noindent 
2000 {\em Mathematics Subject Classification}:\\
Primary 60E07; Secondary 60B15, 44A60 

\noindent
Keywords:  Infinitely divisible Stieltjes moment sequence, product convolution semigroup, asymptotic approximation of integrals, Gumbel distribution.

\section{Introduction} A Stieltjes moment sequence is a sequence of non-negative numbers of the form
\begin{equation}\label{eq:St}
s_n=\int_0^\infty t^n\,d\mu(t),\quad n\in\N_0:=\{0,1,2,\ldots\},
\end{equation}
where $\mu$ is a positive measure on $[0,\infty)$ such that $x^n\in L^1(\mu)$ for all $n\in\N_0$.
The sequence $(s_n)$ is called normalized if $s_0=\mu([0,\infty))=1$, and it is called S-determinate 
(resp. S-indeterminate) if \eqref{eq:St} has exactly one (resp. several) solutions $\mu$ as positive measures on $[0,\infty)$. All these concepts go back to the fundamental memoir of Stieltjes \cite{St}.  

A Stieltjes moment sequence $(s_n)$ is called infinitely divisible if $(s_n^c)$ is a Stieltjes moment sequence for any $c>0$. These sequences were characterized in Tyan's phd-thesis \cite{Ty} and again in \cite{B2} without the knowledge of \cite{Ty}.
An important example of an infinitely divisible normalized Stieltjes moment sequence is $s_n=n!$, first established in Urbanik \cite{U1}. He proved that $e_c$ in \eqref{eq:Urb1} is a probability density such that
\begin{equation}\label{eq:Urb1}
(n!)^c=\int_0^\infty t^n e_c(t)\,dt,\quad e_c(t)=\frac{1}{2\pi}\Int t^{ix-1}\Gamma(1-ix)^c\,dx,\quad c,t >0.
\end{equation} 
Here $\Gamma$ is Euler's Gamma-function. The family $\tau_c=e_c(t)dt,c>0$ is a convolution semigroup in the sense of \cite{B:F} on the  locally compact abelian group $G=(0,\infty)$ under multiplication. It is called the Urbanik semigroup in \cite{B:L}.

By Carleman's criterion for S-determinacy it is easy to prove that $(n!)^c$ is S-determinate for $c\le 2$. That this estimate is sharp was first proved in \cite{B1}, where it was established that $(n!)^c$ is S-indeterminate for $c>2$ based on asymptotic results of Skorokhod \cite{Sk} about stable distributions, see \cite{Z}.  Another proof of the S-indeterminay was given in \cite{B:L} based on the asymptotic behaviour of $e_c(t)$,
\begin{equation}\label{eq:BL1}
e_c(t) = \frac{(2\pi)^{(c-1)/2}}{\sqrt{c}}\frac{\exp(-ct^{1/c})}{t^{(c-1)/(2c)}}
\left[1+\mathcal O\left(t^{-1/c}\right)\right],
\quad t\to\infty. 
\end{equation}

In the recent paper \cite{Lin}, Lin proposes the following conjecture:

{\bf Conjecture} {\it Let $a>0$ be a real constant and let $s_n=\Gamma(na+1), n\in\N_0$. Then

(a) $(s_n)$ is an infinitely divisible Stieltjes moment sequence;

(b) For real $c>0$ the sequence $(s_n^c)$ is S-determinate if and only if $ac\le 2$;

(c) For $0<c\le 2/a$ the unique probability measure $\mu_c$ corresponding to $(s_n^c)$ has the Mellin transform
$$
\int_0^\infty t^s\,d\mu_c(t)=\Gamma(as+1)^c,\quad s\ge 0.
$$
}

When $a=1$ the conjecture is true because of the known results about the Urbanik semigroup, and for $a\in \N,\,  a\ge 2$ the conjecture is true because of the Theorems 5 and 8 in \cite{Lin}.

We shall prove that the conjecture is true, and it is a special case of similar results for the following more general normalized Stieltjes moment sequence
 
\begin{equation}\label{eq:s(a,b)}
s_n(a,b)=\frac{\Gamma(an+b)}{\Gamma(b)}=\frac{1}{a\Gamma(b)}\int_0^\infty t^n t^{b/a -1}\exp(-t^{1/a})\,dt,\quad n=0,1,\ldots, 
\end{equation}
where $a,b>0$ are arbitrary.
 
Defining
\begin{equation}\label{eq:ta}
e_1(a,b)(t)=\frac{1}{a\Gamma(b)}t^{b/a -1}\exp(-t^{1/a}),
\end{equation} 
we get for $\Rea z>-b/a$ and a change of variable $t=s^{a}$
\begin{equation}\label{eq:mel}
\int_0^\infty t^z e_1(a,b)(t)\,dt=\Gamma(az+b)/\Gamma(b).
\end{equation}

This leads to our main result.

\begin{thm}\label{thm:Urbab} 
(i) $(s_n(a,b))$ is an infinitely divisible Stieltjes moment sequence. 

(ii) There exists a uniquely determined convolution semigroup $(\tau_c(a,b))_{c>0}$ of probability measures on the multiplicative group $(0,\infty)$ such that
\begin{equation}\label{eq:s(a,b)z}
\int_0^\infty t^z\,d\tau_c(a,b)(t)=[\Gamma(az+b)/\Gamma(b)]^c,\quad \Rea z>-b/a, 
\end{equation}
and in particular $(s_n(a,b)^c)$ is the moment sequence of $\tau_c(a,b)$.

(iii)  $\tau_c(a,b)=e_c(a,b)(t)\,dt$ on $(0,\infty)$, where
\begin{equation}\label{eq:ec(a,b)}
e_c(a,b)(t)=\frac{1}{2\pi}\Int t^{ix-1}[\Gamma(b-iax)/\Gamma(b)]^c\,dx,\quad t>0
\end{equation}
is a probability density belonging to $C^\infty(0,\infty)$.

(iv) $(s_n(a,b)^c)$ is S-determinate if and only if $ac\le 2$, hence independent of $b>0$.
\end{thm}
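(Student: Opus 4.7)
The plan is to establish (i)--(iii) in a single package via the theory of infinitely divisible laws on $\R$ transported to $(0,\infty)$ by the exponential map, and then to attack (iv) using Carleman's criterion on one side and a Mellin-saddle asymptotic analysis of \eqref{eq:ec(a,b)} on the other, following the $b=1$ template of \cite{B:L}.

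For (i)--(iii), I would first note that $s_n(a,b)$ is the $n$-th moment of $X^a$ where $X$ has the Gamma$(b)$ density $t^{b-1}e^{-t}/\Gamma(b)$, so that $\log[\Gamma(b+az)/\Gamma(b)]$ is the cumulant generating function of the real random variable $Y:=a\log X$. The central step is to derive, from the Weierstrass product for $\Gamma$, the L\'evy--Khintchine representation
\[
\log\frac{\Gamma(b+az)}{\Gamma(b)}=a\psi(b)\,z+\int_0^\infty(e^{-zx}-1+zx)\,\frac{e^{-bx/a}}{x(1-e^{-x/a})}\,dx,\qquad \Rea z>-b/a,
\]
whose kernel is manifestly a positive L\'evy measure on $(0,\infty)$ with finite $(x\wedge x^2)$-integral; hence $Y$ is infinitely divisible. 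Scaling the drift and L\'evy measure by $c>0$ defines a weakly continuous convolution semigroup $(\rho_c)_{c>0}$ on $\R$, and pushing $\rho_c$ forward by $\exp$ produces probability measures $\tau_c(a,b)$ on the multiplicative group $(0,\infty)$ whose Mellin transform equals $[\Gamma(az+b)/\Gamma(b)]^c$ by construction; this proves (i) and (ii). For (iii), Stirling gives $|\Gamma(b-iax)|^c\sim K|x|^{c(b-1/2)}e^{-\pi ac|x|/2}$, so the Fourier inversion integral \eqref{eq:ec(a,b)} converges absolutely together with all its $t$-derivatives, must coincide with the density of $\tau_c(a,b)$, and lies in $C^\infty(0,\infty)$.

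For (iv), the determinate direction is immediate from Stirling: one gets $(s_n(a,b)^c)^{1/(2n)}\asymp n^{ac/2}$, so Carleman's series $\sum (s_n(a,b)^c)^{-1/(2n)}$ diverges precisely when $ac\le 2$, forcing S-determinacy. For the indeterminate direction $ac>2$, I would run a saddle-point analysis on \eqref{eq:ec(a,b)}: the saddle on the imaginary $x$-axis lies near $x=it^{1/(ac)}/a$, and the leading contribution takes the form
\[
e_c(a,b)(t)\sim C(a,b,c)\,t^{\alpha(a,b,c)}\exp\bigl(-ac\,t^{1/(ac)}\bigr),\qquad t\to\infty,
\]
directly generalising \eqref{eq:BL1}. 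Since $1/(ac)<1/2$ when $ac>2$, the Krein integral $\int_1^\infty -\log e_c(a,b)(t)\,t^{-1/2}(1+t)^{-1}\,dt$ is finite, and Krein's theorem in its Stieltjes form delivers S-indeterminacy.

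The main technical obstacle is positivity of the L\'evy kernel in the representation above, i.e.\ verifying that after grouping the Weierstrass product terms and separating the linear drift $a\psi(b)z$, the remainder really is the Laplace exponent of a \emph{positive} measure. The computation is routine but bookkeeping-heavy: one writes $z/n = z/(n+b)+bz/[n(n+b)]$, absorbs $b\sum_{n\ge 1}1/[n(n+b)]=\gamma+\psi(b+1)$ into the drift so that it collapses to $\psi(b)z$, and uses the identity $z/(n+b)-\log(1+z/(n+b))=\int_0^\infty x^{-1}e^{-(n+b)x}(e^{-zx}-1+zx)\,dx$ together with $\sum_{n\ge 0}e^{-(n+b)x}=e^{-bx}/(1-e^{-x})$. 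A secondary nontrivial point is the $t\to\infty$ saddle-point analysis for (iv); since $[\Gamma(b-iax)/\Gamma(1-iax)]^c$ is polynomially bounded relative to the $b=1$ integrand treated in \cite{B:L}, only subleading corrections change and the argument there transfers with minor modification.
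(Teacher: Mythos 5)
Your proposal is correct in substance and follows the paper's overall architecture for parts (iii) and (iv), but reaches (i)--(ii) by a genuinely different (more explicit) route. The paper proves that $\rho(x)=\log\Gamma(b)-\log\Gamma(b-iax)$ is a continuous negative definite function by truncating the Weierstrass product, observing that each partial sum is a finite sum of the elementary negative definite functions $\alpha+i\beta x$ and $\Log(1+i\beta x)$, and then invoking the Berg--Forst correspondence between negative definite functions and convolution semigroups on the LCA group $(0,\infty)$; your argument instead integrates the classical integral representation of $\psi(b+s)-\psi(b)$ to produce an explicit L\'evy--Khintchine formula with L\'evy density $e^{-bx/a}/\bigl(x(1-e^{-x/a})\bigr)$, which identifies $a\log X$ (for $X$ Gamma$(b)$) as infinitely divisible and yields the semigroup by pushing forward under $\exp$. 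Your version buys an explicit L\'evy measure (and hence a concrete description of the semigroup's generator), while the paper's version is softer and avoids verifying positivity of a kernel; both rest on the same product formula and both must still argue uniqueness and the Mellin identity \eqref{eq:s(a,b)z} on the half-plane $\Rea z>-b/a$, which the paper handles via the Fourier uniqueness theorem and an analytic continuation as in \cite[Lemma 2.1]{B1}. For (iv) your plan coincides with the paper's: Carleman via Stirling for $ac\le 2$, and for $ac>2$ the Krein criterion fed by the large-$t$ asymptotics of $e_c(a,b)$; the paper carries out the asymptotics by shifting the inversion contour to the horizontal line $\delta=(t^{1/(ac)}-b)/a$, applying Binet's formula, and reducing to exactly the canonical saddle-point integral of \cite{B:L}, so your "transfers with minor modification" claim is justified, though it is the step that actually requires the technical work (Theorem \ref{thm:main}). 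One small slip: the exponential factor is $\exp(-c\,t^{1/(ac)})$, not $\exp(-ac\,t^{1/(ac)})$ (check against $c=1$, where $e_1(a,b)(t)\propto t^{b/a-1}e^{-t^{1/a}}$); this does not affect the Krein integral, whose convergence depends only on the power $t^{1/(ac)}$, so the indeterminacy conclusion for $ac>2$ stands.
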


Note that \eqref{eq:s(a,b)} is a special case of \eqref{eq:mel}.

The measure $\tau_1(a,b)$ was considered in \cite{Ta}, where it was proved that the measure  is S-indeterminate if $a>\max(2,2b)$. This is a consequence of our result. Note that  $\tau_1(a,1)$ is called the Weibull distribution with shape parameter $1/a$ and scale parameter 1. 

In \eqref{eq:s(a,b)z} and \eqref{eq:ec(a,b)} we use that $\Gamma(z)$ is a non-vanishing holomorphic
function in the cut plane 
\begin{equation}\label{eq:cut}
\mathcal A=\mathbb C\setminus (-\infty,0],
\end{equation}
so we can define
$$
\Gamma(z)^c:=\exp(c\log\Gamma(z)),\quad z\in \mathcal A
$$
using the holomorphic branch of $\log\Gamma$ which is 0 for $z=1$. This branch is explicitly given in \eqref{eq:holbr}.

Let us recall a few facts about convolution semigroups of probability measures on  LCA-groups, see \cite{B:F} for details.

The continuous characters of the multiplicative group $G=(0,\infty)$  can be given as $t\to
t^{ix}$, where $x\in\R$ is arbitrary, and in this way the dual
group $\widehat{G}$ of $G$ can be identified with the additive group
of real numbers. The convolution between measures $\mu,\sigma$ on $(0,\infty)$, called product convolution and denoted $\mu\diamond\sigma$, is defined as
$$
\int_0^\infty f(t)\,d\mu\diamond\sigma(t)=\int_0^\infty\int_0^\infty f(ts)\,d\mu(t)\,d\sigma(s)
$$  
for suitable classes of continuous functions $f$ on $(0,\infty)$, e.g. those of compact support.

A family $(\mu_c)_{c>0}$ of probability measures on the multiplicative group $G=(0,\infty)$ is called a convolution semigroup,  if $\mu_c\diamond \mu_d=\mu_{c+d}, c,d>0$ and $\lim_{c\to 0}\mu_c=\varepsilon_1$ vaguely. Here $\varepsilon_1$ is the Dirac measure with total mass 1 concentrated in the neutral element 1 of the group. By \cite[Theorem 8.3]{B:F} there is a one-to-one correspondence between convolution semigroups $(\mu_c)_{c>0}$ of probability measures on $G$ and continuous negative definite functions $\rho:\R\to\C$ satisfying $\rho(0)=0$ such that
\begin{equation}\label{eq:conv}
\int_0^\infty t^{-ix}\,d\mu_c(t)=\exp(-c\rho(x)),\quad c>0,x\in\R.
\end{equation}

 By the inversion theorem of Fourier analysis for
LCA-groups, if $\exp(-c\rho)$ is integrable on $\R$, then $\mu_c=f_c(t)\,dt$ for a continuous function $f_c(t)$
($tf_c(t)$ is the density of $\mu_c$  with respect to Haar measure $(1/t)dt$ on $(0,\infty)$) given by
\begin{equation}\label{eq:inv}
f_c(t)=\frac{1}{2\pi}\Int t^{ix-1}\exp(-c\rho(x))\,dx,\quad t>0.
\end{equation} 
(Note that the dual Haar measure of $(1/t)dt$ on $(0,\infty)$ is $1/(2\pi)\,dx$ on $\R$.)
 
\begin{prop}\label{thm:negdef}
For $a,b>0$ 
\begin{equation}\label{eq:U5}
\rho(x):=\log\Gamma(b)-\log\Gamma(b-iax),\quad x\in \R
\end{equation}
 is a continuous negative definite function on $\R$  satisfying $\rho(0)=0$.
\end{prop}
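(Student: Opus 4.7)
The statement splits into three claims. That $\rho(0)=0$ is immediate, and the continuity of $\rho$ is inherited from the holomorphy of $\log\Gamma$ on the cut plane $\mathcal A$, since the curve $x\mapsto b-iax$ lies in the open right half-plane, hence well inside $\mathcal A$. The substantial content is the negative definiteness of $\rho$.

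My plan is to produce an explicit L\'evy--Khintchine representation of $\rho$ via the classical Malmst\'en formula
\[
\log\Gamma(z)=\int_0^\infty\Bigl[(z-1)e^{-t}-\frac{e^{-t}-e^{-zt}}{1-e^{-t}}\Bigr]\frac{dt}{t},\qquad \Rea z>0,
\]
applied at $z=b$ and at $z=b-iax$. Subtracting and rearranging (adding and subtracting $iaxt\,e^{-bt}/(1-e^{-t})$ inside the integrand to peel off a real drift) yields
\[
\rho(x)=i\alpha x+\int_0^\infty\bigl(1-e^{iaxt}+iaxt\bigr)\frac{e^{-bt}}{t(1-e^{-t})}\,dt,
\]
where $\alpha=a\int_0^\infty\bigl[e^{-t}/t-e^{-bt}/(1-e^{-t})\bigr]\,dt\in\R$ is an absolutely convergent real integral. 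The substitution $u=at$ converts this to the standard form
\[
\rho(x)=i\alpha x+\int_0^\infty(1-e^{ixu}+ixu)\,d\nu(u),\qquad d\nu(u)=\frac{e^{-bu/a}}{u(1-e^{-u/a})}\,du.
\]
Since $d\nu\sim(a/u^2)\,du$ near $0$ and decays exponentially at infinity, $\int\min(u,u^2)\,d\nu(u)<\infty$, so $\nu$ is a bona fide L\'evy measure on $(0,\infty)$. The displayed expression is a L\'evy--Khintchine representation of a continuous negative definite function on $\R$, cf.\ \cite{B:F}, which completes the argument.

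The main obstacle is the delicate cancellation at $t=0$: each of $iax\,e^{-t}/t$ and $e^{-bt}(e^{iaxt}-1)/(t(1-e^{-t}))$ in the naive splitting is non-integrable at the origin, so the isolation of the finite drift $i\alpha x$ and the compensated L\'evy integral has to be performed at the level of the joint integrand. What makes the bookkeeping go through is the identity $1-e^{iaxt}+iaxt=O(t^2)$ as $t\to 0$, which exactly absorbs the $1/t^2$ singularity of the L\'evy density; with this in place the remaining steps are mechanical verifications.
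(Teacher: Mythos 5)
Your proof is correct, but it takes a genuinely different route from the paper. The paper starts from the Weierstrass product, writes $-\log\Gamma(z)=\gamma z+\Log z+\sum_k(\Log(1+z/k)-z/k)$, shows that each truncation composed with $z=b-iax$ is a finite sum of the elementary negative definite functions $\alpha+i\beta x$ (with $\alpha=\log\Gamma(b)+\rho_n(b)\ge 0$) and $\Log(1+i\beta x)$, and then passes to the pointwise limit; no integral representation is ever produced. You instead start from the Malmst\'en/Binet integral and extract an explicit L\'evy--Khintchine representation, and your handling of the one delicate point --- that the drift and the compensated jump part must be separated inside a single absolutely convergent integrand, using $1-e^{i\theta}+i\theta=O(\theta^2)$ against the $u^{-2}$ singularity of the L\'evy density --- is exactly right; the sign conventions also check out, since $1-e^{ixu}$ and $i\beta x$ ($\beta\in\R$) are negative definite and integration against a positive measure preserves the (pointwise, matrix-inequality) property. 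The two arguments are in fact two faces of the same structure: by Frullani, $\Log\bigl(1-i\tfrac{ax}{b+k}\bigr)=\int_0^\infty(1-e^{ixu})\,e^{-(b+k)u/a}\,\frac{du}{u}$, and summing the exponentials over $k\ge 0$ gives precisely your density $e^{-bu/a}/\bigl(u(1-e^{-u/a})\bigr)$, the compensators $-z/k$ of the Weierstrass product reappearing as your $+ixu$ term and the drift $i\alpha x$. What your version buys is the explicit L\'evy measure of the product convolution semigroup $(\tau_c(a,b))$, which the paper never writes down; what the paper's version buys is economy of means, needing only two standard examples of negative definite functions and closure under sums and pointwise limits.
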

 
Proposition~\ref{thm:negdef} shows that there exists a uniquely determined product convolution semigroup
$(\tau_c(a,b))_{c>0}$ satisfying 

\begin{eqnarray}\label{eq:ix}
\int_0^\infty t^{-ix}\,d\tau_c(a,b)(x)&=&\exp[-c(\log\Gamma(b)-\log\Gamma(b-iax))]\nonumber\\
&=&[\Gamma(b-iax)/\Gamma(b)]^c,\quad x\in\R.
\end{eqnarray}
Like in the proof of \cite[Lemma 2.1]{B1} it is easy to see that \eqref{eq:ix} implies \eqref{eq:s(a,b)z}.

Putting $z=-ix$ in \eqref{eq:mel}, we see by the uniqueness theorem for Fourier transforms that 
$\tau_1(a,b)=e_1(a,b)(t)\,dt$.

The function $(\Gamma(b-iax)/\Gamma(b))^c$ is a Schwartz function on $\R$ and in particular integrable, so \eqref{eq:ec(a,b)} follows from \eqref{eq:s(a,b)z}, and $e_c(a,b)$ is $C^\infty$ on $(0,\infty)$.

In this way we have established (i)-(iii) of Theorem~\ref{thm:Urbab}. The proof of the more difficult part (iv) as well as  the proof of Proposition~\ref{thm:negdef}
will be given in Section 3.

 By Riemann-Lebesgue's Lemma we also see that 
$te_c(a,b)(t)$ tends to zero for $t$ tending to zero and to infinity. Much more on the behaviour near 0 and infinity will be given in  Section 2. There we extend the work of \cite{B:L} leading to the asymptotic behaviour of the densities $e_c(a,b)(t)$ for $t\to 0$ and $t\to \infty$.
The behaviour for $t\to\infty$ will lead to a  proof of the S-indeterminacy for $ac>2$ using the Krein criterion.
 
The fact that $\tau_c(a,b)\diamond\tau_d(a,b)=\tau_{c+d}(a,b)$ can be written
\begin{equation}\label{eq:conv}
e_{c+d}(a,b)(t)=\int_0^\infty e_c(a,b)(t/x)e_d(a,b)(x)\,\frac{dx}{x},\quad c,d>0.
\end{equation}
In particular for $c=d=1$ and the explicit formula for $e_1(a,b)$ we get
\begin{eqnarray}\label{eq:e2}
e_2(a,b)(t)&=&\frac{t^{b/a-1}}{[a\Gamma(b)]^2}\int_0^\infty \exp\left(-x^{-1/a}t^{1/a}-x^{1/a}\right)\,\frac{dx}{x}\\
&=&\frac{2t^{b/a-1}}{a\Gamma(b)^2} K_0(2t^{1/(2a)}),\nonumber
\end{eqnarray}
because the Macdonald function $K_0$ is given by
$$
K_0(z)=\frac12\int_0^\infty\exp(-(z/2)^2/y-y)\,\frac{dy}y,
$$ 
 cf. \cite[8.432(7)]{G:R}, \cite[Chap. 10, Sec. 25]{O:M}.

\section{Main results}
Our main results are

\begin{thm}\label{thm:main} For $c>0$ we have
\begin{equation}\label{eq:as1}
e_c(a,b)(t) = \frac{(2\pi)^{(c-1)/2}}{a\sqrt{c}\Gamma(b)^c}\frac{\exp(-ct^{1/(ac)})}{t^{1-(b-1/2+1/(2c))/a}}
\left[1+\mathcal O\left(t^{-1/(ac)}\right)\right],
\quad t\to\infty. 
\end{equation}
\end{thm}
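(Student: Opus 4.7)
The plan is to extract \eqref{eq:as1} by a classical steepest-descent analysis of the integral representation \eqref{eq:ec(a,b)}, generalising the argument of \cite{B:L} (which handles $a=b=1$) to the present two-parameter setting. First I would substitute $u=ax$ to rewrite
$$
e_c(a,b)(t)=\frac{1}{2\pi a t\,\Gamma(b)^c}\Int e^{\Phi_t(u)}\,du,\qquad \Phi_t(u):=\frac{iu\log t}{a}+c\log\Gamma(b-iu).
$$
Since $b-iu$ has positive real part whenever $\Ima u>-b$, the integrand is holomorphic in the half-plane $\Ima u>-b$, which makes it legal to deform the contour into the upper half-plane.

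Next I would locate the saddle. The equation $\Phi_t'(u)=0$ reads $\psi(b-iu)=(\log t)/(ac)$ with $\psi=\Gamma'/\Gamma$. Using $\psi(z)=\log z-1/(2z)+\mathcal O(z^{-2})$ one finds a unique saddle on the positive imaginary axis at $u_0=iv_0$ with
$$
v_0=t^{1/(ac)}-b+\tfrac12+\mathcal O(t^{-1/(ac)}),\qquad t\to\infty.
$$
I would then deform the original contour onto the horizontal line $\R+iv_0$. Because $\Phi_t''(iv_0)=-c\psi'(b+v_0)=-c\,t^{-1/(ac)}(1+\mathcal O(t^{-1/(ac)}))$ is negative real, this line is one of steepest descent at the saddle.

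The main bookkeeping is then to Taylor-expand $\Phi_t(iv_0+\xi)$ around $\xi=0$, use Stirling's formula together with the refined expression for $v_0$ to evaluate
$$
\Phi_t(iv_0)=-c\,t^{1/(ac)}+\frac{b-1/2}{a}\log t+\frac{c}{2}\log(2\pi)+\mathcal O(t^{-1/(ac)}),
$$
and to integrate the Gaussian factor, which contributes $\sqrt{-2\pi/\Phi_t''(iv_0)}=\sqrt{2\pi/c}\,t^{1/(2ac)}(1+\mathcal O(t^{-1/(ac)}))$. Multiplying by the prefactor $1/(2\pi a t\,\Gamma(b)^c)$ then collapses the constants into $(2\pi)^{(c-1)/2}/(a\sqrt c\,\Gamma(b)^c)$ and assembles the power of $t$ into the stated exponent $(b-1/2+1/(2c))/a-1$, producing \eqref{eq:as1}.

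The main obstacle is the usual one in the method of steepest descent: controlling, uniformly in $t$, both the tail contribution from $|\xi|\gg t^{1/(2ac)}$ along the deformed contour and the cubic-and-higher remainder from the Taylor expansion, so that together they produce only an $\mathcal O(t^{-1/(ac)})$ relative error. This requires Stirling-type estimates for $\log\Gamma(b+v_0-i\xi)$ uniform in $\xi\in\R$, exploiting that $|\Gamma(b+v_0-i\xi)|$ decays like $|\xi|^{b+v_0-1/2}e^{-\pi|\xi|/2}$ as $|\xi|\to\infty$ and is well approximated by the Gaussian factor on intermediate scales. The analogous estimates were already carried out in \cite{B:L}; extending them to arbitrary $b>0$ is routine, and the extension to arbitrary $a>0$ is absorbed by the rescaling in the first step.
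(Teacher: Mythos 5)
Your outline is correct and is, at heart, the same method as the paper: a saddle-point analysis of the Mellin inversion integral \eqref{eq:ec(a,b)} with the contour moved up to height of order $t^{1/(ac)}$, and your bookkeeping checks out (the saddle at $v_0=t^{1/(ac)}-b+\tfrac12+\mathcal O(t^{-1/(ac)})$, the value $\Phi_t(iv_0)=-ct^{1/(ac)}+\frac{b-1/2}{a}\log t+\frac{c}{2}\log(2\pi)+\mathcal O(t^{-1/(ac)})$, and the Gaussian factor $\sqrt{2\pi/c}\,t^{1/(2ac)}$ do assemble into \eqref{eq:as1}). The difference is organizational: the paper does not chase the exact saddle but shifts to the nearby line $\delta=(t^{1/(ac)}-b)/a$ (justified by a small lemma using the decay of $|\Gamma(u+iv)|$ on vertical segments, a step you should not leave implicit), rescales $x=a^{-1}t^{1/(ac)}u$, and applies Binet's formula \eqref{eq:Binet} with the uniform bound \eqref{eq:Binet2}; this factors the integrand as in \eqref{eq:def3} into a $t$-independent phase $f(u)=iu+(1-iu)\Log(1-iu)$ times $g_c(u)=(1-iu)^{-c/2}$ times a factor $1+\mathcal O(t^{-1/(ac)})$ uniform in $u$, so that the remaining integral is literally the one evaluated in \cite{B:L} via \cite{L:P:P} and can be quoted. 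What the paper's route buys is precisely the uniform error control you flag as ``the main obstacle'': your statement that the needed uniform tail and remainder estimates ``were already carried out in \cite{B:L}'' is slightly loose, since there they are proved for the reduced integral, not for your $t$-dependent phase $\Phi_t$; to make your direct version rigorous you would either redo those estimates (the same Binet bound $|\mu(z)|\le 1/(12\Rea z)$, valid uniformly on your line $\Ima u=v_0$ where $\Rea(b-iu)=b+v_0$, is what makes this work) or perform the paper's reduction first, after which your computation of the leading constants is exactly the quoted result. So: correct plan, same essential mechanism, with the genuine technical content deferred to estimates that the paper instead imports wholesale by normalizing the integral to the form treated in \cite{B:L}.
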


\begin{thm}\label{thm:indet}  
The measure $\tau_c(a,b)\,dt$ is S-indeterminate if and only if $ac>2$.
\end{thm}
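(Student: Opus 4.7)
The plan is to prove the two directions of the equivalence separately: Carleman's criterion will supply determinacy in the range $ac\le 2$, and Krein's criterion combined with the tail asymptotic from Theorem~\ref{thm:main} will supply indeterminacy in the range $ac>2$.

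For the determinate direction, I would apply Carleman's criterion to the Stieltjes moment sequence $(s_n(a,b)^c)$. Stirling's formula gives
\[
\log\Gamma(an+b)=an\log(an)-an+(b-\tfrac12)\log(an)+O(1),\quad n\to\infty,
\]
so $(s_n(a,b)^c)^{1/(2n)}\sim (an/e)^{ac/2}$ as $n\to\infty$, and hence the Carleman series $\sum_{n\ge 1}(s_n(a,b)^c)^{-1/(2n)}$ is comparable to $\sum_{n\ge 1} n^{-ac/2}$. This series diverges precisely when $ac\le 2$, so Carleman's criterion for the Stieltjes moment problem yields S-determinacy in that range.

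For the indeterminate direction, I would invoke Krein's criterion: an absolutely continuous probability measure on $(0,\infty)$ with a.e.\ strictly positive density $f$ and finite moments of all orders is S-indeterminate provided
\[
\int_0^\infty \frac{-\log f(x^2)}{1+x^2}\,dx<\infty.
\]
Theorem~\ref{thm:main} supplies
\[
-\log e_c(a,b)(t)=c\,t^{1/(ac)}+O(\log t),\quad t\to\infty,
\]
so after substituting $t=x^2$ the integrand at infinity behaves like $c\,x^{2/(ac)-2}$, which is integrable iff $2/(ac)<1$, i.e.\ iff $ac>2$. Integrability near $x=0$ is controlled by the power-type small-$t$ behaviour of $e_c(a,b)$ derived in Section~2, which yields at worst a logarithmic singularity in $-\log e_c(a,b)(x^2)$, harmless against $dx/(1+x^2)$.

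The bulk of the analytic work---the match between the decay rate $t^{1/(ac)}$ of $-\log e_c(a,b)(t)$ at infinity and the Krein threshold---is already supplied by Theorem~\ref{thm:main}, so the expected main obstacle is only the technical verification of the positivity hypothesis of Krein's criterion. For $c\ge 1$ this is immediate from the convolution identity \eqref{eq:conv} together with the strict positivity of the gamma-type density $e_1(a,b)$ in \eqref{eq:ta}. For the remaining range $0<c<1$, which intersects the indeterminacy regime only when $a>2$, positivity of $e_c(a,b)$ has to be extracted from the product-convolution semigroup structure, along the lines of the argument used in \cite{B:L} for the Urbanik case $a=b=1$.
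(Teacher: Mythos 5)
Your overall strategy coincides with the paper's: Carleman's criterion for $ac\le 2$ (your Stirling computation is exactly the one in the paper) and Krein's criterion combined with the tail asymptotic of Theorem~\ref{thm:main} for $ac>2$. The one substantive difference is which version of Krein's criterion you invoke, and it matters. You use the classical form, which requires the density to be a.e.\ strictly positive on $(0,\infty)$ and the log-integral to converge over the whole half-line; this forces you to (a) control $-\log e_c(a,b)(x^2)$ near $x=0$, which you handle correctly via the small-$t$ asymptotics, and (b) prove strict positivity of $e_c(a,b)$ on all of $(0,\infty)$. Point (b) is where your argument has a genuine gap. Your convolution argument settles $c\ge 1$, but for $0<c<1$ --- which does intersect the indeterminacy regime, since $ac>2$ holds for any $c>0$ once $a>2/c$ --- you only gesture at ``the product-convolution semigroup structure,'' and the appeal to \cite{B:L} does not help: in the Urbanik case $a=1$, indeterminacy occurs only for $c>2$, so that paper never needs positivity of $e_c$ for $c<1$. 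Theorems~\ref{thm:main} and \ref{thm:main2} give positivity only outside a compact subinterval of $(0,\infty)$, and strict positivity of the density of an infinitely divisible law in the remaining ``middle range'' is not automatic.

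The paper sidesteps all of this by citing the refined version of Krein's criterion due to H.~L.~Pedersen (\cite[Theorem 4]{H}): S-indeterminacy already follows if
\[
\int_K^\infty \frac{\log e_c(a,b)(t^2)}{1+t^2}\,dt>-\infty
\]
for \emph{some} $K\ge 0$. No positivity or integrability hypothesis near the origin is needed, and Theorem~\ref{thm:main} alone guarantees both the positivity of $e_c(a,b)$ on $[K^2,\infty)$ for $K$ large and the convergence of this integral precisely when $ac>2$. If you replace the classical Krein citation by this version, your remaining obligations evaporate and your proof becomes the paper's.
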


\begin{thm}\label{thm:main2} For $c>0$ and $0<t<1$ we have
\begin{equation}\label{eq:asat0}
e_c(a,b)(t)=\frac{t^{b/a-1}}{[a\Gamma(b)]^c}\frac{[\log(1/t)]^{c-1}}{\Gamma(c)} +\mathcal O\left(t^{b/a-1}[\log(1/t)]^{c-2}\right),\quad t\to 0.
\end{equation}
\end{thm}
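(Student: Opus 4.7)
The plan is to recast $e_c(a,b)(t)$ as a Mellin--Barnes contour integral and extract the small-$t$ behaviour by deforming the contour onto a Hankel contour around the branch cut of $\Gamma(w)^c$ on $(-\infty,0]$. The algebraic branch point of $\Gamma(w)^c$ at $w=0$, of order $c$, is what produces the factor $[\log(1/t)]^{c-1}/\Gamma(c)$.

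Substituting $w=b-iax$ in \eqref{eq:ec(a,b)} gives
$$e_c(a,b)(t)=\frac{t^{b/a-1}}{a\,\Gamma(b)^c}\,I(U),\qquad I(U):=\frac{1}{2\pi i}\int_{b-i\infty}^{b+i\infty}\Gamma(w)^c\,e^{wU}\,dw,$$
with $U:=a^{-1}\log(1/t)\to+\infty$ as $t\to 0^+$. The factorisation $\Gamma(w)^c=w^{-c}\,\Gamma(w+1)^c$, with each factor holomorphic on $\mathcal A$ under the paper's branch conventions, isolates the branch singularity at $w=0$. I then deform the vertical line $\Rea w=b$ to a Hankel contour $H$ consisting of a small circle of radius $1/U$ around $w=0$ and two rays to $-\infty$ at angles $\pm(\pi-\delta)$ for some small fixed $\delta>0$. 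The rays avoid the poles of $\Gamma$ at $-1,-2,\ldots$, and the horizontal segments closing the deformation vanish in the limit because Stirling makes $|\Gamma(w)^c\,e^{wU}|$ decay exponentially in $|\Ima w|$ uniformly on bounded strips.

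Rescaling by $w=v/U$ transforms $H$ into a unit-scale Hankel contour $H_1$ independent of $U$, giving
$$I(U)=\frac{U^{c-1}}{2\pi i}\int_{H_1}v^{-c}\,\Gamma(1+v/U)^c\,e^v\,dv.$$
The classical Hankel identity $\frac{1}{2\pi i}\int_{H_1}v^{-c}e^v\,dv=1/\Gamma(c)$ supplies the leading term. For the remainder, $\Gamma(1+v/U)^c-1=\mathcal O(|v|/U)$ holds uniformly on $\{v\in H_1:|v|\le U/2\}$ (from $\Gamma(1+\zeta)^c=1-c\gamma\zeta+\mathcal O(\zeta^2)$), bounding that portion of the integral by $C/U$ times the convergent integral $\int_{H_1}|v|^{1-\Rea c}|e^v|\,|dv|$, where convergence uses $|e^v|\le e^{-|v|\cos\delta}$ on the rays. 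The outer piece $\{|v|>U/2\}$ is exponentially small in $U$, since $|\Gamma(1+v/U)^c|$ grows only polynomially in $|v|/U$ by Stirling while $e^{\Rea v}$ decays at rate $\cos\delta$. Hence $I(U)=U^{c-1}/\Gamma(c)+\mathcal O(U^{c-2})$, and substituting $U=a^{-1}\log(1/t)$ back yields exactly \eqref{eq:asat0}. The main technical hurdle is the contour deformation, particularly bounding $|\Gamma(w)^c|$ on the rays near the poles $w=-k$ of $\Gamma$; this is handled by keeping the opening angle strictly less than $\pi$ and applying the reflection formula $\Gamma(w)\Gamma(1-w)=\pi/\sin(\pi w)$ to convert values at $\Rea w<0$ into ones where Stirling applies cleanly.
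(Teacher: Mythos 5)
Your argument is correct in substance and, importantly, it is a genuinely different organization from the paper's proof. The paper splits into four cases: for $0<c<1$ it wraps a contour around the cut $[0,\infty)$ of $(-z)^{-c}$ and computes the boundary-value jump $g_+-g_-$, reducing everything to the real integral $\int_0^s x^{-c}\varphi(x)\,dx$ and Euler's reflection formula; for $1<c<2$ it first integrates by parts to lower the exponent into $(0,1)$; for $c>2$ it rescales by $\La/a$ and deforms to a Hankel contour, exactly as you do --- but there the remainder term $(-w)^{1-c}e^{-w}R(w)$ must be integrated over the \emph{vertical} line $V_{-1}$, on which $|e^{-w}|$ is constant, so integrability forces $c>2$; and $c=1,2$ are done from the explicit formulas \eqref{eq:ta} and \eqref{eq:e2}. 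Your single observation that removes the case distinction is to tilt the Hankel rays to angles $\pm(\pi-\delta)$ with $\delta>0$: then $|e^v|\le e^{-|v|\cos\delta}$ decays exponentially along the rays, $\int_{H_1}|v|^{1-c}|e^v|\,|dv|$ converges for every $c>0$ (since $|v|$ is bounded below on $H_1$), and the Hankel identity $\frac{1}{2\pi i}\int_{H_1}v^{-c}e^v\,dv=1/\Gamma(c)$ together with $\Gamma(1+v/U)^c-1=\mathcal O(|v|/U)$ on $|v|\le U/2$ gives $I(U)=U^{c-1}/\Gamma(c)+\mathcal O(U^{c-2})$ uniformly in $c$. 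This buys a shorter, case-free proof; what it costs is a slightly more delicate contour deformation, and your write-up is loose at exactly that point: the closing segments at height $\pm R$ run from $\Rea w=b$ out to $\Rea w=-R\cot\delta$, so they are \emph{not} contained in a bounded vertical strip and the plain Stirling estimate you invoke does not apply there; one must use the reflection formula (which you do mention at the end) to get $|\Gamma(w)^c|\le CR^{N}e^{-c\pi R/2}$ uniformly on those segments. Also, your remark that $|\Gamma(1+v/U)^c|$ ``grows only polynomially'' on the outer rays is not accurate --- by the reflection formula it in fact decays super-exponentially there --- but this error is in the harmless direction and does not affect the exponential smallness of the outer piece. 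With those two points tightened, your proof is complete and covers all $c>0$ at once.
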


\begin{rem}\label{thm:remark2} {\rm Formula \eqref{eq:asat0} shows that $e_c(a,b)(t)$ tends to 0 for $t\to 0$ if $b/a>1$, and to infinity if $b/a<1$, independent of $c$. If $b/a=1$ then $e_c(a,b)(t)$ tends to 0 for $c<1$ and to infinity as a power of $\log(1/t)$ when $c>1$.}
\end{rem}

\section{Proofs} 
\medskip

{\it Proof of Proposition~\ref{thm:negdef}:}
From the Weierstrass product for the entire function $1/\Gamma(z)$, we get  the following holomorphic branch  in the cut plane $\mathcal A$, cf. \eqref{eq:cut},
\begin{equation}\label{eq:holbr}
-\log\Gamma(z)=\gamma z + \Log z
+\sum_{k=1}^\infty\left(\Log(1+z/k)-z/k\right),\quad z\in\mathcal A, 
\end{equation}
where $\Log$ denotes the principal logarithm, and $\gamma$ is Euler's constant.

For $n\in\N$ and $z\in\mathcal A$ define
\begin{eqnarray*}
\rho_n(z)&=&\gamma z + \Log z
+\sum_{k=1}^n\left(\Log(1+z/k)-z/k\right),\\ 
R_n(z)&=&\sum_{k=n+1}^\infty\left(\Log(1+z/k)-z/k\right)
\end{eqnarray*}
so $\lim_{n\to\infty} \rho_n(z)=-\log\Gamma(z)$,  uniformly on compact subsets of $\mathcal A$.

Furthermore, we have
$$
\log\Gamma(b)+\rho_n(b)+R_n(b)=0,
$$
and since $\log(1+x)<x$ for $x>0$, we see that $R_n(b)<0$ and hence $\log\Gamma(b)+\rho_n(b)>0$.

We claim that $\log\Gamma(b)+\rho_n(b-iax)$ is a continuous negative definite function, and letting $n\to\infty$ we get the assertion of Proposition~\ref{thm:negdef}. 

To see the claim, we write
\begin{eqnarray*}
&&\log\Gamma(b)+\rho_n(b-iax)=\\
&&\log\Gamma(b) + (b-iax)\left(\gamma-\sum_{k=1}^n\frac{1}{k}\right) +\Log(b-iax)  +\sum_{k=1}^n\Log\left(1+\frac{b-iax}{k}\right)=\\
&&\log\Gamma(b)+\rho_n(b)-iax\left(\gamma-\sum_{k=1}^n\frac{1}{k}\right)+\sum_{k=0}^n
\Log\left(1-i\frac{ax}{b+k}\right),
\end{eqnarray*}
and the assertion follows since $\alpha +i\beta x$ and $\Log(1+i\beta x)$ are
 negative definite functions when $\alpha\ge 0, \beta\in\R$, see \cite{B:F}, \cite{S:S:V}. $\quad\square$ 

\medskip
{\it Proof of Theorem~\ref{thm:main}}:

We modify the proof given in \cite{B:L} and start by applying Cauchy's integral theorem to move the integration in \eqref{eq:ec(a,b)} to a horizontal line
\begin{equation}\label{eq:line}
H_\delta:=\{z=x+i\delta \;:\; x\in\R\},\quad \delta>-b/a.
\end{equation}

\begin{lemma}\label{thm:shift} With $H_\delta$ as in \eqref{eq:line} we have
\begin{equation}\label{eq:def}
e_c(a,b)(t)=\frac{1}{2\pi}\int_{H_\delta} t^{iz-1}[\Gamma(b-iaz)/\Gamma(b)]^c\,dz,\quad t>0.
\end{equation}
\end{lemma}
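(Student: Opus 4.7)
The strategy is to apply Cauchy's integral theorem to the integrand
\begin{equation*}
F(z) := t^{iz-1}\left[\Gamma(b-iaz)/\Gamma(b)\right]^c
\end{equation*}
on the rectangular contour $R_M$ with corners $\pm M$ and $\pm M + i\delta$, and to show that the contributions from the two vertical sides tend to zero as $M \to \infty$. I would treat the case $\delta > 0$; the case $-b/a < \delta < 0$ is entirely analogous, with the rectangle lying below the real axis.

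First I would verify holomorphicity of $F$. The affine map $z \mapsto w = b - iaz$ sends the open half-plane $\{z \in \C : \Ima z > -b/a\}$ onto $\{w : \Rea w > 0\}$, which is contained in the cut plane $\mathcal A$. Hence, using the holomorphic branch of $\log\Gamma$ defined by \eqref{eq:holbr}, the factor $[\Gamma(b-iaz)/\Gamma(b)]^c = \exp(c[\log\Gamma(b)-\log\Gamma(b-iaz)])$ is holomorphic on $\{\Ima z > -b/a\}$. Since $t^{iz-1}$ is entire in $z$ for fixed $t > 0$, $F$ is holomorphic on an open set containing the closed rectangle $R_M$, so Cauchy's theorem gives $\oint_{R_M} F(z)\,dz = 0$.

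The main step, and the only genuine obstacle, is estimating $F$ on the vertical segments $z = \pm M + iy$, $y \in [0,\delta]$. For $t > 0$ fixed, $|t^{iz-1}| = t^{-1-y}$ is uniformly bounded for $y \in [0,\delta]$. On the right-hand segment, Stirling's asymptotic formula for $\Gamma(\sigma+i\tau)$ along vertical lines yields
\begin{equation*}
|\Gamma(b + ay - iaM)| \sim \sqrt{2\pi}\,(aM)^{b+ay-1/2}\,e^{-\pi aM/2}, \quad M \to \infty,
\end{equation*}
uniformly in $y \in [0,\delta]$. Since $|\Gamma(w)^c| = |\Gamma(w)|^c$ for the chosen branch, this gives $|F(M+iy)| \le C\,M^{c(b+a\delta-1/2)}e^{-\pi acM/2}$ for large $M$, whose integral over $y \in [0,\delta]$ is exponentially small. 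The left-hand segment is handled identically using $|\Gamma(\overline{w})| = |\Gamma(w)|$.

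Letting $M \to \infty$ in $\oint_{R_M} F\,dz = 0$, the bottom side converges to $\int_{-\infty}^{\infty} F(x)\,dx = 2\pi\,e_c(a,b)(t)$ by \eqref{eq:ec(a,b)}, the top side (reversed by orientation) contributes $-\int_{H_\delta} F(z)\,dz$, and the vertical pieces contribute $0$. Rearranging yields \eqref{eq:def}. The whole argument hinges on the exponential factor $e^{-\pi acM/2}$ supplied by Stirling's formula, which is exactly what is needed to kill the vertical contributions.
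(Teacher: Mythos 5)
Your proof is correct and follows essentially the same route as the paper: Cauchy's theorem on a rectangle between the real axis and $H_\delta$, with the vertical sides killed by the uniform asymptotic $|\Gamma(u+iv)|\sim\sqrt{2\pi}\,e^{-\pi|v|/2}|v|^{u-1/2}$ for bounded $u$. The paper states the holomorphy on the slightly larger domain $\C\setminus i(-\infty,-b/a]$, but your half-plane $\{\Ima z>-b/a\}$ suffices for every admissible $\delta$, so nothing is lost.
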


\begin{proof} For $t,c>0$ fixed, $f(z)=t^{iz-1}[\Gamma(b-iaz)/\Gamma(b)]^c$ is holomorphic in the simply connected domain $\mathbb C\setminus i(-\infty,-b/a]$, so \eqref{eq:def} follows from Cauchy's integral theorem provided the integral
$$
\int_0^{\delta} f(x+iy)\,dy
$$
tends to 0 for $x\to\pm\infty$. We have
$$
|f(x+iy)|=t^{-y-1}|\Gamma(b+y-iax)/\Gamma(b)|^c
$$
and since
$$
 |\Gamma(u+iv)|\sim \sqrt{2\pi}e^{-\pi/2|v|}|v|^{u-1/2},\quad |v|\to\infty, \;\mbox{uniformly for bounded real $u$},
$$
cf. \cite[p.141, Eq. 5.11.9]{NISTA:R}, \cite[8.328(1)]{G:R}, the result follows.
\end{proof}

\medskip
In the following we will use  Lemma~\ref{thm:shift} with the line of integration $H_\delta$, where $
\delta=(t^{1/(ac)}-b)/a$.
Therefore, 
$$
e_c(a,b)(t)=t^{(b-t^{1/(ac)})/a-1}\frac{1}{2\pi}\Int t^{ix}[\Gamma(t^{1/(ac)}-iax)/\Gamma(b)]^c\,dx,
$$
and after the change of variable $x=a^{-1}t^{1/(ac)}u$ and putting $A:=(1/c+b-a)/a$
\begin{equation}\label{eq:def2}
e_c(a,b)(t)=t^{A-a^{-1}t^{1/(ac)}}\frac{1}{2\pi a}\Int t^{iua^{-1}t^{1/(ac)}}[\Gamma(t^{1/(ac)}(1-iu))/\Gamma(b)]^c\,du.
\end{equation}

Binet's formula for $\Gamma$ is (\cite[8.341(1)]{G:R})
\begin{equation}\label{eq:Binet}
\Gamma(z)=\sqrt{2\pi}z^{z-\frac12}e^{-z+\mu(z)},\quad \textrm{Re\,}(z)>0,
\end{equation}
where
\begin{equation}\label{eq:Binet1}
\mu(z)=\int_0^\infty\left(\frac{1}{2}-\frac{1}{t}+\frac{1}{e^t-1}\right)\frac{e^{-zt}}{t}\,dt,\quad \textrm{Re\,}(z)>0.
\end{equation}
Notice that $\mu(z)$ is the Laplace transform of a positive function, so we
have the estimates for $z=r+is, r>0$
\begin{equation}\label{eq:Binet2}
|\mu(z)|\le \mu(r)\le \frac{1}{12r},
\end{equation}
where the last inequality is a classical version of Stirling's formula, thus showing that the estimate is uniform in $s\in\mathbb R$.

Inserting this in \eqref{eq:def2}, we get after some simplification
\begin{equation}\label{eq:def3}
e_c(a,b)(t)=\frac{(2\pi)^{c/2-1}}{a\Gamma(b)^c}t^{A-1/(2a)}e^{-ct^{1/(ac)}}\Int e^{ct^{1/(ac)}f(u)}g_c(u)M(u,t)\,du,
\end{equation}
where
\begin{equation}\label{eq:hp1}
f(u):=iu+(1-iu)\Log(1-iu),\quad g_c(u):=(1-iu)^{-c/2}
\end{equation}
and
\begin{equation}\label{eq:hp2}
M(u,t):=\exp[c\mu(t^{1/(ac)}(1-iu))].
\end{equation}
From \eqref{eq:Binet2} we get $M(u,t)=1+\mathcal O(t^{-1/(ac)})$ for
$t\to\infty$, uniformly in $u$.
We shall therefore consider the behaviour for large $x$ of
\begin{equation}\label{eq:final}
\Int  e^{xf(u)}g_c(u)\,du, \quad x=ct^{1/(ac)}.
\end{equation}

This is the same integral which was treated in \cite[Eq.(28)]{B:L} leading to
$$
\Int  e^{xf(u)}g_c(u)\,du=(2\pi/x)^{1/2}[1+\mathcal O(x^{-1})]
$$
by methods from \cite{L:P:P}.

For $x=ct^{1/(ac)}$ we find 
$$
\int_{-\infty}^\infty e^{ct^{1/(ac)}f(u)}g_c(u)du=\frac{\sqrt{2\pi}}{\sqrt{c}t^{1/(2ac)}}[1+{\mathcal O}(t^{-1/(ac)})],
$$
hence
$$
e_c(a,b)(t)=\frac{(2\pi)^{(c-1)/2}}{a\sqrt{c}\Gamma(b)^c}\frac{e^{-ct^{1/(ac)}}}{t^{1-(b-1/2-1/(2c))/a}}[1+{\mathcal O}(t^{-1/(ac)})].
$$
$\quad\square$

\medskip
{\it Proof of Theorem~\ref{thm:indet}.} 

We first prove that $(s_n(a,b)^c)$ is S-determinate for $ac\le 2$ by Carleman's criterion, cf. \cite[p. 20]{S:T}. In fact, from Stirling's formula we have
$$
s_n(a,b)^{c/(2n)}=(\Gamma(na+b)/\Gamma(b))^{c/(2n)}\sim (na/e)^{ac/2},\quad n\to\infty,
$$ 
so $\sum s_n(a,b)^{-c/(2n)}=\infty$ if and only if $ac\le 2$.

Since Carleman's criterion is only a sufficient condition for S-determinacy, we need to prove that 
$e_c(a,b)$ is S-indeterminate for $ac>2$.
We apply the Krein criterion for S-indeterminacy of probability densities concentrated on the half-line, using a version due to H. L. Pedersen given in \cite[Theorem 4]{H}. 
It states that if 
\begin{equation}\label{eq:as3}
\int_K^\infty \frac{\log e_c(a,b)(t^2)\,dt}{1+t^2}>-\infty
\end{equation}
for some $K\ge 0$, then $\tau_c(a,b)=e_c(a,b)(t)\,dt$ is S-indeterminate.
This version of the Krein criterion is a simplification of a stronger version given in \cite{P}.
 We shall see that
\eqref{eq:as3} holds for $ac>2$.

From Theorem~\ref{thm:main} we see that
\eqref{eq:as3} holds for sufficiently large $K>0$ if and only if
$$
\int_K^\infty \frac{-ct^{2/(ac)}}{1+t^2}\,dt>-\infty,
$$
and the latter holds precisely for $ac>2$. This shows that $\tau_c(a,b)$ is S-indeterminate for $ac>2$.
$\quad\square$

\medskip
{\it Proof of Theorem~\ref{thm:main2}.}
\medskip

The proof uses the same ideas as in \cite{B:L}, but since the proof is quite technical, we give the full proof with the necessary modifications.
Since we are studying the behaviour for $t\to 0$, we assume that
$0<t<1$ so that $\La:=\log(1/t)>0$.

We will need integration along vertical lines
\begin{equation}\label{eq:vert1}
V_\a:=\{\a+iy\mid y=-\infty\ldots\infty\},\quad \a\in \R,
\end{equation}
and we can therefore  express \eqref{eq:ec(a,b)} as
\begin{equation}\label{eq:fund1}
e_c(a,b)(t)=\frac{t^{b/a-1}}{2\pi ia\Gamma(b)^c }\int_{V_{-b}} t^{z/a}\Gamma(-z)^cdz.
\end{equation}
By the functional equation for $\Gamma$ we get
\begin{equation}\label{eq:cont1}
e_c(a,b)(t)=\frac{t^{b/a-1}}{2\pi i a\Gamma(b)^c} \int_{V_{-b}} g(z)\varphi(z)dz,
\end{equation}
where we have defined
$$
\varphi(z):=t^{z/a}\Gamma(1-z)^{c},\quad g(z):=(-z)^{-c}=\exp(-c\Log(-z)).
$$
Note that $\varphi$ is holomorphic in $\mathbb C\setminus[1,\infty)$, while $g$
is holomorphic in $\mathbb C\setminus [0,\infty)$. 

For $x>0$ we define
$$
g_{\pm}(x):=\lim_{\eps\to 0+}g(x \pm i\eps)=x^{-c}e^{\pm i\pi c}.
$$

\medskip
{\bf Case 1.} Assume $0<c<1$.

We fix $0<s<1$, choose $0<\eps<\min(s,b)$  and integrate $g(z)\varphi(z)$ over the contour
$\mathcal C$
$$
\{-b+iy \mid y=\infty\ldots 0\}\cup [-b,-\eps]\cup \{\eps e^{i\theta}\mid\theta=\pi\ldots 0\}\cup[\eps,s]\cup\{s+iy\mid y=0\ldots \infty\}
$$ 
and get 0 by the integral theorem of Cauchy. On the interval $[\eps,s]$ we use  $g=g_+$.

Similarly we get 0 by integrating  $g(z)\varphi(z)$ over the complex conjugate contour $\overline{\mathcal C}$,
and now we use  $g=g_{-}$ on the interval $[\eps,s]$.

Subtracting the second contour integral from the first leads to
$$ 
\int_{V_s}-\int_{V_{-b}}-\int_{|z|=\eps} g(z)\varphi(z)\,dz + \int_\eps^s \varphi(x)
(g_{+}(x)-g_{-}(x))\,dx=0,
$$
where the integral over the circle is with positive orientation. Note
that the two integrals over $[-b,-\eps]$ cancel. Using that $0<c<1$ it is easy to see that the just mentioned integral converges to $0$ for $\eps\to 0$, and we finally get for $\eps\to 0$ 
 $$
e_{c}(a,b)(t)=\frac{t^{b/a-1}}{2\pi i a\Gamma(b)^c}\int_{V_s} g(z)\varphi(z)\,dz +
\frac{t^{b/a-1}\sin(\pi c)}{\pi a\Gamma(b)^c}\int_0^s  x^{-c}\varphi(x)\,dx:=I_1+I_2.
$$
We claim that  $I_1$ is $o(t^{(s+b)/a-1})$ for $t\to 0$. To
see this we insert the parametrization of $V_s$ and get
\begin{eqnarray*}  
I_1&=&\frac{t^{b/a-1}}{2\pi a\Gamma(b)^c}\Int (-s-iy)^{-c}t^{(s+iy)/a}\Gamma(1-s-iy)^{c}\,dy\\
&=&\frac{t^{(s+b)/a-1}}{2\pi a\Gamma(b)^c}\Int e^{-iy\Lambda /a}(-s-iy)^{-c}\Gamma(1-s-iy)^c\,dy,
\end{eqnarray*}
and the integral is $o(1)$ for $t\to 0$ by Riemann-Lebesgue's Lemma because $\Lambda:=\log(1/t)\to\infty$.

The substitution $u=x\La$ in the integral in the term $I_2$ leads to
\begin{equation}\label{eq:I2}  
I_2=\frac{t^{b/a-1}\sin(\pi c)}{\pi a\Gamma(b)^c}\La^{c-1}
\int_0^{s\La} u^{-c}e^{-u/a}\Gamma(1-u/\La)^c\,du.
\end{equation}
We split the integral in \eqref{eq:I2} as
\begin{equation}\label{eq:I2a}
\int_0^{s\La} u^{-c}e^{-u/a}\left[\Gamma(1-u/\La)^c -1\right]\,du + \int_0^\infty u^{-c}e^{-u/a}\,du - \int_{s\La}^\infty u^{-c}e^{-u}\,du.
\end{equation}
Calling the three terms $J_1, J_2, J_3$
we have $J_2=a^{1-c}\Gamma(1-c)$ and
$$
J_3=-a^{1-c}\Gamma(1-c,s\La/a),
$$
where $\Gamma(\a,x)$ is the incomplete Gamma function with the asymptotics 
$$
\Gamma(\a,x)=\int_x^\infty u^{\a-1}e^{-u}\,du\sim x^{\a-1}e^{-x},\quad x\to\infty,
$$
cf. \cite[8.357]{G:R}, hence
$$
J_3=\mathcal O(t^{s/a}\La^{-c}),\quad t\to 0.
$$
Using the Digamma function $\Psi=\Gamma'/\Gamma$, we get by the mean-value theorem 
$$
\Gamma(1-u/\La)^c -1=-\frac{u}{\La}c\Gamma(1-\theta
u/\La)^c\Psi(1-\theta u/\La)
$$
for some $0<\theta<1$, but this implies that
$$
|\Gamma(1-u/\La)^c-1|\le \frac{cu}{\La}M(s),\quad
0<u<s\La,
$$
where
$$
M(s):=\max\{\Gamma(x)^c|\Psi(x)|\mid 1-s\le x\le 1\}.
$$
so $J_1=\mathcal O(\La^{-1})$ for $t\to 0$.

This gives
\begin{eqnarray*}
I_2&=&\frac{t^{b/a-1}\sin(\pi c)}{\pi a\Gamma(b)^c}\La^{c-1}\left(\mathcal O(\La^{-1})+a^{1-c}\Gamma(1-c)+\mathcal O(t^{s/a}\La^{-c})\right)\\
&=&\frac{t^{b/a-1}\La^{c-1}}{(a\Gamma(b))^c\Gamma(c)}+\mathcal O(t^{b/a-1}\La^{c-2}),
\end{eqnarray*}
where we have used Euler's reflection formula for $\Gamma$.
Since finally
$$
I_1=o(t^{(s+b)/a-1})=\mathcal O(t^{b/a-1}\La^{c-2}),
$$
we see that \eqref{eq:asat0} holds.

\bigskip
{\bf Case 2.} Assume $1<c<2$.

The Gamma function decays so rapidly on vertical lines $z=\a+iy, y\to\pm\infty$, that we can integrate by parts in
\eqref{eq:cont1} to get
\begin{equation}\label{eq:cont3}
e_{c}(a,b)(t)=-\frac{t^{b/a-1}}{2\pi i a\Gamma(b)^c}\int_{V_{-1}} \frac{(-z)^{-(c-1)}}{c-1}\frac{d}{dz}\left(t^{z/a}\Gamma(1-z)^c\right)\,dz.
\end{equation}
Defining
$$
\varphi_1(z):=\frac{d}{dz}\left(t^{z/a}\Gamma(1-z)^c\right)=t^{z/a}\Gamma(1-z)^c\left((1/a)\log t-c\Psi(1-z)\right),
$$
and using the same contour technique as in Case 1 to the integral in \eqref{eq:cont3}, where now $0<c-1<1$, we get for $0<s<1$ fixed
$$
e_c(a,b)(t)=-\frac{t^{b/a-1}}{a \Gamma(b)^c}\left(\tilde{I}_1+\tilde{I}_2\right),
$$
where
\begin{eqnarray*}
\tilde{I}_1&=&\frac{1}{2\pi i (c-1)}\int_{V_s}(-z)^{-(c-1)}\varphi_1(z)\,dz,\\ 
\tilde{I}_2&=&\frac{\sin(\pi(c-1))}{\pi(c-1)}\int_0^s x^{-(c-1)}\varphi_1(x)\,dx.
\end{eqnarray*}
We have $\tilde{I}_1=o(t^{s/a}\La)$ for $t\to 0$ by Riemann-Lebesgue's Lemma, and the substitution
$u=x\La$ in the second integral leads to
\begin{eqnarray*}
\lefteqn{\int_0^s
x^{-(c-1)}\varphi_1(x)\,dx}\\
&=&\La^{c-2}\int_0^{s\La}u^{-(c-1)}\varphi_1(u/\La)\,du\\
&=&-(1/a)\La^{c-1}\left(\int_0^{s\La} u^{-(c-1)}e^{-u/a}\,du+\int_0^{s\La} u^{-(c-1)}e^{-u/a}\left(\Gamma(1-u/\La)^c-1\right)\,du\right)\\
&-&
c\La^{c-2}\int_0^{s\La}u^{-(c-1)}e^{-u/a}\Gamma(1-u/\La)^c\Psi(1-u/\La)\,du\\
&=&-a^{1-c}\La^{c-1}\Gamma(2-c)+\mathcal O(\La^{c-2}).
\end{eqnarray*}
Using that
$$
 \frac{\sin(\pi(c-1))}{(c-1)\pi}\left(-a^{1-c}\La^{c-1}\Gamma(2-c)\right)=-a^{1-c}\frac{\La^{c-1}}{\Gamma(c)}
$$
by Euler's reflection formula, we see that \eqref{eq:asat0} holds.

\bigskip
{\bf Case 3.} Assume $c>2$.

We perform the change of variable $w=(1/a)\La z$ in \eqref{eq:cont1} and assume that $\La >a$. This gives
$$
e_c(a,b)(t)=\frac{t^{b/a-1}\La^{c-1}}{[a\Gamma(b)]^c}\frac{1}{2\pi i}\int_{V_{-(b/a)\La}}(-w)^{-c} e^{-w}\Gamma(1-aw/\La)^c\,dw.
$$
  Using Cauchy's integral theorem, we can shift the contour $V_{-(b/a)\La}$
  to $V_{-1}$  as the integrand is holomorphic in the vertical strip between both paths and exponentially small at both extremes of that vertical strip. 
For the holomorphic function $h(z)=\Gamma(1-z)^c$ in the  domain $G=\mathbb C\setminus[1,\infty)$, which is star-shaped
with respect to $0$, we have
$$
h(z)=h(0)+z\int_0^1 h'(uz)\,du,\quad z\in G,
$$ 
hence
\begin{equation}\label{eq:taylor}
\Gamma(1-aw/\La)^c=1-\frac{caw}{\La}\int_0^1 \Gamma(1-uaw/\La)^c\Psi(1-uaw/\La)\,du.
\end{equation}
 Defining 
$$
R(w)=\int_0^1 \Gamma(1-uaw/\La)^c\Psi(1-uaw/\La)\,du,
$$
we get
\begin{eqnarray*}
\lefteqn{\frac{1}{2\pi i}\int_{V_{-1}}(-w)^{-c}e^{-w}\Gamma(1-aw/\La)^c\,dw =}\\
&& \frac{1}{2\pi i}\int_{V_{-1}} (-w)^{-c} e^{-w}dw +
\frac{ac/\Lambda}{2\pi i}\int_{V_{-1}}(-w)^{1-c} e^{-w}R(w)dw.
\end{eqnarray*}
For any $w\in V_{-1}, 0\le u\le 1$ and for $\La\ge a$ we have that
 $1-uaw/\La$ belongs to  the closed vertical strip
 located between the vertical lines $V_1$ and $V_2$. Because
$\Gamma(z)^c\Psi(z)$ is  continuous and bounded in this strip, $R(w)$ is bounded for $w\in V_{-1}$ by a constant independent of $\La\ge a$. Furthermore,
 $(-w)^{1-c}e^{-w}$ is integrable over $V_{-1}$
because $c>2$.

On the other hand, in the  integral
$$
\frac{1}{2\pi i}\int_{V_{-1}} (-w)^{-c} e^{-w}dw
$$
the contour $V_{-1}$ may be deformed to a Hankel contour 
$$
\mathcal H:=\{x-i \mid x=\infty\ldots  0\}\cup  \{e^{i\theta}\mid \theta=-\pi/2 \ldots -3\pi/2\}
\cup\{x+i\mid x=0\ldots \infty\}
$$
surrounding $[0,\infty)$, and the integral over $\mathcal H$ is Hankel's integral representation of the inverse of the Gamma function:
$$
\frac{1}{2\pi i}\int_{\mathcal H} (-w)^{-c} e^{-w}dw=\frac{1}{\Gamma(c)}.
$$
Therefore,  when we join everything, we obtain that for $c>2$:
$$
e_c(a,b)(t)=\frac{t^{b/a-1}}{[a\Gamma(b)]^c}\frac{[\log(1/t)]^{c-1}}{\Gamma(c)}+\mathcal O\left(t^{b/a-1}[\log(1/t)]^{c-2}\right), \quad t\to 0.
$$

\medskip
{\bf Case 4.} $c=1,c=2$.

These cases are easy since $e_1(a,b)(t)$ is explicitly given by \eqref{eq:ta} and $e_2(a,b)(t)$ by
\eqref{eq:e2}. The asymptotics of $K_0$ is known:
$$
K_0(t)=\log(2/t)+\mathcal O(1),\quad t\to 0.
$$ 
$\quad\square$

\begin{rem} {\rm The behaviour of $e_c(a,b)(t)$ for $t\to 0$ can be
    obtained from \eqref{eq:fund1} using the residue theorem when $c$
    is a natural number. In fact, in this case $\Gamma(-z)^c$ has a
    pole of order $c$ at $z=0$, and a shift of the contour $V_{-1}$ to
    $V_s$, where $0<s<1$, has to be compensated by a residue, which
    will give the behaviour for $t\to 0$.

When $c$ is a natural number one can actually express $e_c(a,b)(t)$
in terms of Meijer's G-function:
$$
e_c(a,b)(t)=\frac{t^{b/a-1}}{a\Gamma(b)^c} G^{c,0}_{0,c}\left(t^{1/a} \mid \begin{array}{lcl}
- & \cdots & -\\ 0 & \cdots & 0 \end{array}  \right),
$$
cf. Section 9.3 in \cite{G:R}.
}
\end{rem}

\section{A one parameter extension of the Gumbel distributions}

The group isomorphism $x=\log(1/t)$ of the multiplicative group $(0,\infty)$ onto the additive group $\R$ of real numbers transforms the convolution semigroup $(\tau_c(a,b))_{c>0}$  into an ordinary
convolution semigroup $(G_c(a,b))_{c>0}$ of probability measures on $\R$ with densities given by
\begin{equation}\label{eq:Guc}
g_c(a,b)(x)=e^{-x}e_c(a,b)(e^{-x}),\quad x\in\R,
\end{equation}
and $a,b,c>0$ are arbitrary. For $c=1$ we have
\begin{equation}\label{eq:Gu1}
g_1(a,b)(x)=\frac1{a\Gamma(b)}\exp\left(-bx/a-e^{-x/a}\right),\quad x\in\R.
\end{equation}
This density is infinitely divisible and the uniquely determined convolution roots are given by \eqref{eq:Guc}.

The special density $g_1(a,1)(x)$ is the Gumbel density with scale parameter $a>0$, and the basic case $a=1$ is discussed in \cite{B:L}. From the asymptotic behaviour of $e_c(a,b)$ in Theorems
\ref{thm:main} and \ref{thm:main2} we can obtain the asymptotic behaviour of the convolution roots $g_c(a,b)$:
\begin{equation}\label{eq:g-inf}
g_c(a,b)(x)=\frac{(2\pi)^{(c-1)/2}}{a\sqrt{c}\Gamma(b)^c}\frac{\exp\left(-ce^{-x/(ac)}\right)}{\exp\left(x(b-1/2+1/(2c))/a\right)}
\left[1+\mathcal O\left(\exp(x/(ac)\right)\right]
\end{equation}
for $x\to-\infty$, and
\begin{equation}\label{eq:ginf}
g_c(a,b)(x)=\frac{\exp(-bx/a)x^{c-1}}{[a\Gamma(b)]^c\Gamma(c)} +\mathcal O\left(\exp(-bx/a)x^{c-2}\right),\quad x\to\infty.
\end{equation}

\begin{thm}\label{thm:Gudet}
All densities $g_c(a,b)$ belong to determinate Hamburger moment problems.
\end{thm}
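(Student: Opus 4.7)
The plan is to reduce the statement to the classical fact that a probability measure on $\mathbb{R}$ whose two-sided Laplace transform is finite in an open neighborhood of the origin has a determinate Hamburger moment problem. From \eqref{eq:Guc} and a change of variable $t=e^{-x}$, the bilateral Laplace transform of $g_c(a,b)$ can be read off directly from the Mellin transform \eqref{eq:s(a,b)z}: for all complex $z$ with $\Rea z>-b/a$ one has
\begin{equation*}
\int_{-\infty}^\infty e^{-zx} g_c(a,b)(x)\,dx=\int_0^\infty t^z e_c(a,b)(t)\,dt=[\Gamma(az+b)/\Gamma(b)]^c.
\end{equation*}
Restricting to real $z=-s$, the moment generating function $M(s):=\int e^{sx}g_c(a,b)(x)\,dx=[\Gamma(-as+b)/\Gamma(b)]^c$ is finite for every $s<b/a$, and in particular in the open neighborhood $(-\delta,\delta)$ of $0$ with $\delta=b/(2a)>0$.

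I would then invoke either of two standard consequences. Conceptually cleanest: a measure whose MGF converges in a neighborhood of $0$ is uniquely determined by its moments (since the Laplace transform is analytic and agrees with $\sum m_n s^n/n!$ on that neighborhood, so the moments determine $M$, and $M$ determines the measure by uniqueness of Laplace transforms). Alternatively, I can expand $M(s)=\sum_{n\ge 0} m_n s^n/n!$ for $|s|<\delta$ to get $|m_n|\le C\,n!/\delta^n$, hence $m_{2n}^{1/(2n)}=O(n)$ and Carleman's criterion $\sum_n m_{2n}^{-1/(2n)}=\infty$ for Hamburger determinacy applies.

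The only thing to justify carefully is the Mellin–Laplace identification and the finiteness of all absolute moments $\int|x|^n g_c(a,b)(x)\,dx$; both follow immediately from the absolute convergence of the integral defining the Mellin transform in the open strip $\Rea z>-b/a$, established in Theorem~\ref{thm:Urbab}(ii). There is no real obstacle: the combination of exponential decay at $-\infty$ visible from \eqref{eq:g-inf} and the sub-exponential (polynomial times $e^{-bx/a}$) decay at $+\infty$ from \eqref{eq:ginf} already shows by inspection that $E[e^{sX}]<\infty$ for $s$ in a neighborhood of $0$, so the result holds for every choice of $a,b,c>0$.
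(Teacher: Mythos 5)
Your proof is correct, but it takes a genuinely different route from the paper. The paper first establishes Carleman's condition for $g_1(a,b)$ by an explicit estimate of the even moments $s_{2n}$ (splitting the integral at $s=1$ and bounding the two pieces by $(2n)!/b^{2n+1}$ and $\Gamma(2n+b)$ respectively), and then invokes Corollary~3.3 of \cite{B0} — a preservation-of-determinacy-under-convolution result — to transfer the Carleman condition to all convolution roots $g_c(a,b)$. You instead exploit the explicit bilateral Laplace transform $\int e^{sx}g_c(a,b)(x)\,dx=[\Gamma(b-as)/\Gamma(b)]^c$, valid for $s<b/a$ by \eqref{eq:s(a,b)z} after the substitution $t=e^{-x}$, which is finite on a neighbourhood of $0$ for \emph{every} $c>0$ simultaneously; this handles all $c$ uniformly and dispenses with the external convolution-root lemma entirely, which is arguably cleaner. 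One caveat: your ``conceptually cleanest'' first justification (moments determine $M$, and $M$ determines the measure) is loose as stated, since a competing representing measure with the same moments need not a priori have a finite Laplace transform, so you cannot directly compare transforms; the standard theorem that a finite MGF near $0$ implies Hamburger determinacy is of course true, but its proof is essentially your second route, namely the bound $m_{2n}\le C\,(2n)!\,(2/\delta)^{2n}$ giving $m_{2n}^{1/(2n)}=O(n)$ and hence Carleman's condition $\sum m_{2n}^{-1/(2n)}=\infty$. That second route is fully rigorous and suffices, so the proposal stands.
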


\begin{proof} We first prove that $g_1(a,b)$ is determinate, and for this it suffices to verify that 
the moments
\begin{equation}\label{eq:g1mom}  
s_n=\Int x^n g_1(a,b)(x)\,dx
\end{equation}
verify Carleman's condition  $\sum_{n=0}^\infty s_{2n}^{-1/(2n)}=\infty$,
cf. \cite[p. 19]{S:T}. From \eqref{eq:g1mom} we get
\begin{eqnarray*}
s_{2n}&=&\frac{1}{a\Gamma(b)}\int_0^\infty (\log t)^{2n}t^{b/a-1}\exp(-t^{1/a})\,dt=
\frac{1}{\Gamma(b)}\int_0^\infty (a\log s)^{2n}s^{b-1}e^{-s}\,ds\\
&<&\frac{a^{2n}}{\Gamma(b)}\left(\int_0^1 (\log s)^{2n}s^{b-1}\,ds +\int_1^\infty s^{2n+b-1}e^{-s}\,ds\right).
\end{eqnarray*}
By integrations by parts we see that
$$
\int_0^1 (\log s)^{2n}s^{b-1}\,ds=\frac{(2n)!}{b^{2n+1}},
$$
and
$$
\int_1^\infty s^{2n+b-1}e^{-s}\,ds<\Gamma(2n+b),
$$
hence
$$
s_{2n}^{1/(2n)}<\frac{a}{\Gamma(b)^{1/(2n)}}\left[\left(\frac{(2n)!}{b^{2n+1}}\right)^{1/(2n)} + \Gamma(2n+b)^{1/2n}\right],
$$
and the Carleman condition follows from Stirling's formula, which shows that the right-hand side is bounded by $Kn$ for sufficiently large $K>0$. We next use Corollary 3.3 in \cite{B0} to infer that the Carleman condition also holds for all convolution roots $g_c(a,b)$.
\end{proof}

Concerning the moments
\begin{equation}\label{eq:gcmom}
s_n(c)=\Int x^ng_c(a,b)(x)\,dx, n\in\N_0
\end{equation}
of the convolution roots we have the following result:

\begin{thm}\label{thm:momroots} The moments $s_n(c)$ of \eqref{eq:gcmom} is a polynomial
\begin{equation}\label{eq:gcmom1}
s_n(c)=\sum_{k=1}^n a_{n,k}c^k,\quad n\ge 1,
\end{equation}
of degree at most $n$ in the variable $c$. The coefficients $a_{n,k}$ are given below.
\end{thm}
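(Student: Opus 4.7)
The plan is to compute $s_n(c)$ by differentiating the Mellin transform of $e_c(a,b)$ at $z=0$. Substituting $t=e^{-x}$ in \eqref{eq:gcmom} and using \eqref{eq:Guc} gives
$$
s_n(c) = \int_0^\infty (\log(1/t))^n\, e_c(a,b)(t)\,dt = (-1)^n\int_0^\infty (\log t)^n\, e_c(a,b)(t)\,dt.
$$
On the other hand \eqref{eq:s(a,b)z} (with $z$ real for simplicity) yields
$$
F_c(z) := \int_0^\infty t^z e_c(a,b)(t)\,dt = e^{c\psi(z)}, \qquad \psi(z):=\log\Gamma(az+b)-\log\Gamma(b),
$$
which is holomorphic in the half-plane $\Rea z>-b/a$. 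The asymptotics established in Theorems~\ref{thm:main} and \ref{thm:main2} show that $(\log t)^n e_c(a,b)(t)$ is integrable on $(0,\infty)$ for every $n$, so differentiation under the integral sign is legitimate to all orders at $z=0$. Hence
$$
s_n(c) = (-1)^n F_c^{(n)}(0) = (-1)^n\frac{d^n}{dz^n}\bigg|_{z=0} e^{c\psi(z)}.
$$

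Next I invoke Fa\`a di Bruno's formula in the form of the partial Bell polynomials $B_{n,k}$. Since $\psi(0)=0$, for every $n\ge 1$
$$
\frac{d^n}{dz^n}\bigg|_{z=0} e^{c\psi(z)} = \sum_{k=1}^n c^k\, B_{n,k}\!\left(\psi'(0),\psi''(0),\ldots,\psi^{(n-k+1)}(0)\right).
$$
Each term on the right is a monomial of degree $k$ in $c$, which immediately establishes that $s_n(c)$ is a polynomial in $c$ of degree at most $n$ with $a_{n,0}=0$. A direct computation gives $\psi^{(j)}(0)=a^j\Psi^{(j-1)}(b)$ for $j\ge 1$, where $\Psi=\Gamma'/\Gamma$ is the Digamma function. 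Using the homogeneity relation $B_{n,k}(ax_1,a^2x_2,\ldots,a^{n-k+1}x_{n-k+1})=a^n B_{n,k}(x_1,x_2,\ldots,x_{n-k+1})$, we arrive at the explicit expression
$$
a_{n,k} = (-a)^n\, B_{n,k}\!\left(\Psi(b),\Psi'(b),\ldots,\Psi^{(n-k)}(b)\right),\qquad 1\le k\le n,
$$
which is what should be recorded after the theorem statement.

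There is no real obstacle in this argument; the only technicality is justifying the interchange of integration and differentiation at every order. This is routine given the asymptotics already proved in Section 2: exponential decay of $e_c(a,b)(t)$ like $\exp(-ct^{1/(ac)})$ at infinity and the bound $O(t^{b/a-1}[\log(1/t)]^{c-1})$ near $0$ provide the dominated-convergence majorants for all the $(\log t)^n$ factors needed.
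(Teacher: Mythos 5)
Your proof is correct and follows essentially the same route as the paper: both identify $s_n(c)$ with the $n$-th derivative at the origin of $\exp(c\psi)$, where $\psi(z)=\log\Gamma(az+b)-\log\Gamma(b)$ is (up to the substitution $z=iy$) the negative of the negative definite function of the semigroup, so that everything reduces to the derivatives of $\log\Gamma$ at $b$. The only real difference is how the coefficients are packaged: the paper sets $\sigma_j=(-a)^{j+1}\Psi^{(j)}(b)$ and quotes the recursion $a_{n+1,k+1}=\sum_{j=k}^{n}a_{j,k}\binom{n}{j}\sigma_{n-j}$ from an earlier reference, whereas you give the closed form $a_{n,k}=(-a)^{n}B_{n,k}\bigl(\Psi(b),\dots,\Psi^{(n-k)}(b)\bigr)$ via Fa\`a di Bruno; these agree, since the partial Bell polynomials satisfy exactly that recursion, and your formula reproduces the paper's sample values $a_{n,1}=\sigma_{n-1}$, $a_{n,n-1}=\binom{n}{2}\sigma_0^{n-2}\sigma_1$, $a_{n,n}=\sigma_0^{n}$. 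Your justification of differentiation under the integral sign via the asymptotics of Section 2 is sound (and slightly more explicit than the paper, which leans on the existence of all moments already secured in Theorem~\ref{thm:Gudet}).
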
 

\begin{proof} From \eqref{eq:s(a,b)z}  we get
$$
\Int e^{-ixy}\,dG_c(a,b)(x)=\int_0^\infty t^{iy} e_c(a,b)(t)\,dt=[\Gamma(b+iay)/\Gamma(b)]^c,
$$
which shows that the negative definite function $\rho$ corresponding to the convolution semigroup 
$(G_c(a,b))_{c>0}$ is
$$
\rho(y)=\log\Gamma(b)-\log\Gamma(b+iay), \quad y\in\R.
$$
The derivatives of $\rho$ can be expressed in terms of the Digamma function $\Psi$, namely
$$
\rho^{(n+1)}(y)=-(ia)^{n+1}\Psi^{(n)}(b+iay), \quad n\in\N_0,
$$
so if  for $n\in\N_0$ we define (cf.  \cite[Eq. (2.7)]{B})
$$
\sigma_n:=-i^{n+1}\rho^{(n+1)}(0)=(-a)^{n+1}\Psi^{(n)}(b),
$$
we find
\begin{eqnarray*}
\sigma_0 &=& a\gamma +\frac{a}{b}-ab\sum_{k=1}^\infty \frac{1}{k(b+k)},\\
\sigma_n &=& a^{n+1}n!\sum_{k=0}^\infty\frac{1}{(b+k)^{n+1}}=a^{n+1}n!\zeta(n+1,b),\quad n\in\N,
\end{eqnarray*}
where $\zeta(z,q)$ is Hurwitz' Zeta function, cf. \cite[9.521]{G:R}.

According to \cite{B} we have $s_1(c)=\sigma_0 c, s_2(c)=\sigma_1 c+\sigma_0^2 c^2$ and in general $s_n(c)$ is given by \eqref{eq:gcmom1} where the coefficients $a_{n,k}$ are given by the recursion
$$
a_{n+1,k+1}=\sum_{j=k}^n a_{j,k}\binom{n}{j}\sigma_{n-j},\quad n\ge k\ge 0.
$$
It is easy to see that
$$
a_{n,1}=\sigma_{n-1},\quad a_{n,n-1}=\binom{n}{2}\sigma_0^{n-2}\sigma_1,\quad a_{n,n}=\sigma_0^n.
$$
\end{proof}

\medskip
C. Berg, Department of Mathematical Sciences, University of
Copenhagen, Universitetsparken 5,
2100 Copenhagen {\O}, Denmark

email: berg@math.ku.dk

\end{document}